\newtheorem{propo}{{\bf Proposition}}[section]
\newtheorem{coro}[propo]{{\bf Corollary}}
\newtheorem{lemma}[propo]{{\bf Lemma}} \newtheorem{theor}[propo]{{\bf
Theorem}} 
\newenvironment{proof}{{\bf Proof.}}{$\Box$}
\def\N{{\mathbb N}}
\begin{document}

\vspace*{1.0in}

\begin{center}  ON $n$-MAXIMAL SUBALGEBRAS OF LIE ALGEBRAS
\end{center}
\bigskip

\begin{center} DAVID A. TOWERS 
\end{center}
\bigskip
\centerline {Department of
Mathematics, Lancaster University} \centerline {Lancaster LA1 4YF,
England} \centerline {d.towers@lancaster.ac.uk}
\bigskip

\begin{abstract}
 A chain $S_0 < S_1 < \ldots < S_n = L$ is a {\em maximal chain} if each $S_i$ is a maximal subalgebra of $S_{i+1}$. The subalgebra $S_0$ in such a series is called an {\em $n$-maximal} subalgebra. There are many interesting results concerning the question of what certain intrinsic properties of the maximal subalgebras of a Lie algebra $L$ imply about the structure of $L$ itself. Here we consider whether similar results can be obtained by imposing conditions on the $n$-maximal subalgebras of $L$, where $n>1$.
\end{abstract}

\section{Introduction}
Throughout $L$ will denote a finite-dimensional Lie algebra over a field $F$. A chain $S_0 < S_1 < \ldots < S_n = L$ is a {\em maximal chain} if each $S_i$ is a maximal subalgebra of $S_{i+1}$. The subalgebra $S_0$ in such a series is called an {\em $n$-maximal} subalgebra. There are many interesting results concerning the question of what certain intrinsic properties of the maximal subalgebras of a Lie algebra $L$ imply about the structure of $L$ itself. For example: all maximal subalgebras are ideals of $L$ if and only if $L$ is nilpotent (see \cite{barnes}); all maximal subalgebras of $L$ are c-ideals of $L$ if and only if $L$ is solvable (see \cite{cideal}); if $L$ is solvable then all maximal subalgebras have codimension one in $L$ if and only if $L$ is supersolvable (see \cite{barnes1}); $L$ can be characterised when its maximal subalgebras satisfy certain lattice-theoretic conditions, such as modularity (see \cite{var}). Our purpose here is to consider whether similar results can be obtained by imposing conditions on the $n$-maximal subalgebras of $L$, where $n>1$.
\par

Similar studies have proved fruitful in group theory (see, for example, \cite{bel}, \cite{hup} and \cite{mann}). For Lie algebras the following result was established by Stitzinger.

\begin{theor} (Stitzinger, \cite[Theorem]{stit})\label{t:stit} Every $2$-maximal subalgebra of $L$ is an ideal of $L$ if and only if
one of the following holds:
\begin{itemize}
\item[(i)] $L$ is nilpotent and $\phi(L)= \phi(M)$ for all maximal subalgebras $M$ of $L$;
\item[(ii)] $\dim L = 2$; or
\item[(iii)] $L$ is simple and every proper subalgebra is one-dimensional.
\end{itemize}
\end{theor}
\bigskip

In the above result $\phi(L)$ denotes the Frattini ideal of $L$; that is, the largest ideal contained in the intersection of the maximal subalgebras of $L$. Our first objective in the next section is to find a similar characterisation of Lie algebras in which all $2$-maximal subalgebras are subideals, and then those in which they are nilpotent. In section three we consider when all $3$-maximals are ideals, and when they are subideals. In the final section we look at the situation where every $n$-maximal subalgebra is a subideal.
\section{$2$-maximal subalgebras}
First, the following observations will be useful.

\begin{lemma}\label{l:fi} Let $A/B$ be a chief factor of $L$ with $A \subseteq \phi(L)$. Then $A/B$ is an irreducible $L/\phi(L)$-module.
\end{lemma}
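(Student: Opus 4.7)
The plan is to show that $\phi(L)$ acts trivially on $A/B$, i.e.\ $[\phi(L),A]\subseteq B$. Since $A/B$ is a chief factor, it is automatically an irreducible $L$-module under the adjoint action (an $L$-submodule corresponds to an ideal of $L$ strictly between $B$ and $A$). Once we know $\phi(L)$ sits in the kernel of the action, this $L$-module structure descends to an irreducible $L/\phi(L)$-module structure, which is the desired conclusion.

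To carry this out, I would first observe that $[\phi(L),A]$ is an ideal of $L$ (it is the bracket of two ideals) contained in $A$, so $[\phi(L),A]+B$ is an ideal of $L$ sandwiched between $B$ and $A$. By the chief factor hypothesis this is either $B$ (which is exactly what we want) or $A$. The heart of the argument is ruling out the second possibility. Here I invoke the fact that $\phi(L)$ is nilpotent. Since $A\subseteq\phi(L)$, we have $A/B\subseteq\phi(L)/B$, and $\phi(L)/B$ is still nilpotent; consequently every element of $\phi(L)/B$ acts via $\mathrm{ad}$ nilpotently on $\phi(L)/B$, hence on the subspace $A/B$. Engel's theorem then gives $[\phi(L)/B,\,A/B]\subsetneq A/B$, that is $[\phi(L),A]+B\subsetneq A$, so the chief factor dichotomy forces $[\phi(L),A]+B=B$, as required.

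The step I expect to be the main obstacle is the one where nilpotency of the Frattini ideal is used; the rest is formal. If one wished to avoid citing nilpotency of $\phi(L)$ as an external fact, the same endgame still works provided one can show that the $\phi(L)$-action on $A/B$ is nilpotent, for which the containment $A/B\subseteq\phi(L)/B$ and the nilpotency of $\mathrm{ad}$ inside any nilpotent subalgebra are exactly what is needed.
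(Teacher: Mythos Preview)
Your argument is correct, and its overall shape matches the paper's: both establish $[\phi(L),A]\subseteq B$ so that the $L$-action on $A/B$ factors through $L/\phi(L)$, whence irreducibility as an $L$-module gives irreducibility as an $L/\phi(L)$-module. The difference lies in how the key inclusion is obtained. The paper invokes the Barnes--Gastineau-Hills characterization of the nilradical $N$ as the intersection of the centralizers of the chief factors; since $\phi(L)\subseteq N$, the inclusion $[\phi(L),A]\subseteq B$ is immediate. You instead work directly from the nilpotency of $\phi(L)$: $A/B$ is a nonzero ideal of the nilpotent algebra $\phi(L)/B$, so $[\phi(L)/B,A/B]$ is a proper sub-ideal of $A/B$, and the chief-factor dichotomy then forces it to be zero. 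Your route is more self-contained (it avoids citing the centralizer description of $N$), while the paper's is a one-line appeal to a known structural fact. One minor remark: what you call ``Engel's theorem'' here is really the elementary observation that in a nilpotent Lie algebra no nonzero ideal $I$ satisfies $[L,I]=I$ (else $I$ would lie in every term of the lower central series); this is immediate from nilpotency and does not need the full strength of Engel.
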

\begin{proof} The nilradical, $N$, of $L$ is the intersection of the centralizers of the factors in a chief series of $L$, by \cite[Lemma 4.3]{bg}. Since $\phi(L) \subseteq N$ this implies that $[A,\phi(L)] \subseteq B$ and so the multiplication of $L$ on $A$ induces a module action of $L/\phi(L)$ on $A/B$. Hence $A/B$ can be viewed as an irreducible $L/\phi(L)$-module.
\end{proof}
\bigskip

We will refer to a chief factor such as is described in Lemma \ref{l:fi} as being {\em below} $\phi(L)$.

\begin{lemma}\label{l:nmax} If every $n$-maximal subalgebra of $L$ is a subideal of $L$, then every $(n-1)$-maximal subalgebra is nilpotent.
\end{lemma}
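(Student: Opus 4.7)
Here is my plan. Let $S$ be an $(n-1)$-maximal subalgebra of $L$; I want to show $S$ is nilpotent. By Barnes' theorem (quoted in the introduction), it suffices to prove that every maximal subalgebra $M$ of $S$ is an ideal of $S$.

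So fix a maximal subalgebra $M$ of $S$. Stacking the maximal chain $M < S$ on top of a maximal chain from $S$ up to $L$ of length $n-1$ exhibits $M$ as an $n$-maximal subalgebra of $L$, so by hypothesis $M$ is a subideal of $L$. The first key step is to note that this forces $M$ to be a subideal of $S$ as well: given a chain $M = M_0 \triangleleft M_1 \triangleleft \ldots \triangleleft M_k = L$, intersecting with $S$ gives $M = M_0 \cap S \triangleleft M_1 \cap S \triangleleft \ldots \triangleleft M_k \cap S = S$, where each inclusion is an ideal inclusion because $[M_i \cap S, M_{i+1} \cap S] \subseteq [M_i, M_{i+1}] \cap S \subseteq M_i \cap S$.

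The second key step is the normalizer property for subideals in a finite-dimensional Lie algebra: a proper subideal $H$ of $S$ satisfies $N_S(H) \supsetneq H$. This is immediate from a subideal chain $H = H_0 \triangleleft H_1 \triangleleft \ldots \triangleleft H_r = S$ chosen of minimal length, since then $H_1 \neq H$ and $H_1 \subseteq N_S(H)$. Applying this to $M$ (assumed proper in $S$, else there is nothing to show), maximality of $M$ in $S$ together with $N_S(M) \supsetneq M$ forces $N_S(M) = S$, i.e.\ $M$ is an ideal of $S$.

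Since every maximal subalgebra of $S$ is an ideal, Barnes' theorem yields that $S$ is nilpotent, as required. I do not anticipate a genuine obstacle here; the only point that requires a little care is verifying that a subideal of $L$ contained in $S$ is a subideal of $S$, and this is handled by the intersection argument above.
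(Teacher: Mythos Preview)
Your proof is correct and follows essentially the same route as the paper's: both show that a maximal subalgebra of an $(n-1)$-maximal subalgebra $S$ is an $n$-maximal, hence a subideal of $L$, hence a subideal of $S$, hence an ideal of $S$, and then invoke Barnes' theorem. The paper compresses the two ``key steps'' you spell out (the intersection argument and the normalizer property) into the phrases ``and thus of $J$'' and ``It follows that $I$ is an ideal of $J$'', so your version simply makes these standard facts explicit.
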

\begin{proof} Let $J$ be an $(n-1)$-maximal subalgebra of $L$. Then every maximal subalgebra $I$ of $J$ is an $n$-maximal subalgebra of $L$ and so is a subideal of $L$, and thus of $J$. It follows that $I$ is an ideal of $J$, and hence that $J$ is nilpotent, by \cite{barnes}. 
\end{proof}

\begin{theor}\label{t:sub}  Every $2$-maximal subalgebra of $L$ is a subideal of $L$ if and only if
one of the following holds:
\begin{itemize}
\item[(i)] $L$ is nilpotent;
\item[(ii)]  $L=N+Fx$ where $N$ is the nilradical, $N^2=0$ and ad\,$x$ acts irreducibly on $N$; or
\item[(iii)] $L$ is simple with every proper subalgebra one dimensional.
\end{itemize}
\end{theor}
\begin{proof} Let every $2$-maximal of $L$ be a subideal of $L$. If $L$ is simple then (iii) holds with $\phi(L)=0$. So suppose that $N$ is a maximal ideal of $L$. Since $N$ will be contained in a maximal subalgebra of $L$ it will be nilpotent, by Lemma \ref{l:nmax}.
\par

 Suppose first that $N \not \subseteq \phi(L)$. Then there is a maximal subalgebra $M$ of $L$ such that $L=N+M$. Since $M$ is nilpotent, $L$ is solvable. Moreover, $L$ is nilpotent or minimal non-nilpotent. Suppose that $L$ is minimal non- nilpotent and $\phi(L) \neq 0$, so $L=N+Fx$ where $N$ is the nilradical of $L$, $N^2=\phi(L)$ and ad\,$x$ acts irreducibly on $N/N^2$, by \cite[Theorem 2.1]{nilp}. But now $\phi(L)+Fx$ is a maximal subalgebra of $L$ and any $2$-maximal subalgebra of $L$ containing $Fx$ would have to be contained in a proper ideal of $L$, which would be nilpotent, by Lemma \ref{l:nmax}, and so contained in $N$. It follows that $\phi(L)=0$. Hence either (i) or (ii) holds.
\par

So suppose now that $N \subseteq \phi(L)$. Then $N=\phi(L)$ and $L/\phi(L)$ is simple with every proper subalgebra one dimensional. Now $N+Fs$ is a maximal subalgebra of $L$ for every $s \in S$, and, as in the preceding paragraph, any $2$-maximal subalgebra containing $Fs$ would be contained in $N$. It follows that $N=0$.
\par

Conversely, let $L$ satisfy (i), (ii) or (iii). If $L$ is nilpotent then every subalgebra of $L$ is a subideal of $L$. If (ii) holds then the maximal subalgebras of $L$ are $N$ and $Fx$, and so the $2$-maximal subalgebras are inside $N$ and so are subideals of $L$. If (iii) holds then the only $2$-maximal subalgebra is the trivial subalgebra.
\end{proof}
\bigskip

Note that, over a perfect field $F$ of characteristic zero or $p>3$, for $L$ to satisfy condition (iii) in Theorem \ref{t:stit}, it must be three-dimensional and $\sqrt{F} \not \subseteq F$, by \cite[Theorem 3.4]{chief}.
\par

Next we consider when all of the $2$-maximal subalgebras are nilpotent. We consider the non-solvable and solvable cases separately, as for the former case we require restrictions on the field $F$.

\begin{theor}\label{t:nonsolv}  Let $L$ be a non-solvable Lie algebra over an algebraically closed field $F$ of characteristic different from $2,3$. Then every $2$-maximal subalgebra of $L$ is nilpotent if and only if $L/\phi(L) \cong sl(2)$ and $sl(2)$ acts nilpotently on $\phi(L)$. If $F$ has characteristic zero or if $L$ is restricted, then $\phi(L)=0$. 
\end{theor}
\begin{proof} Suppose that  every $2$-maximal subalgebra of $L$ is nilpotent, and let $M$ be a maximal subalgebra of $L$. If $M$ is not nilpotent then there is an element $x \in M$ such that ad\,$x|_M$ has a non-zero eigenvalue, $\lambda$ say. But now $M=Fx+Fy$ since this is not nilpotent. Hence, every maximal subalgebra of $L$ is nilpotent or two dimensional; in particular, they are all solvable. If $F$ has characteristic $p>3$, it follows from \cite[Proposition 2.1]{varea} that $L/\phi(L) \cong sl(2)$. Moreover, all maximal subalgebras of $sl(2)$ are two dimensional, so $\phi(L)+Fx$ is nilpotent for every $x \in sl(2)$. The claim for characteristic zero is well known; that for the case when $L$ is restricted is \cite[Corollary 2.13]{varea}
\par

The converse is easy.
\end{proof}

\begin{theor}\label{t:solv} Let $L$ be a solvable Lie algebra over a field $F$. Denote the image of a subalgebra $S$ of $L$ under the canonical homomorphism onto $L/\phi(L)$ by $\bar{S}$. Then all $2$-maximal subalgebras of $L$ are nilpotent if and only if one of the following occurs:
\begin{itemize}
\item[(i)] $L$ is nilpotent;
\item[(ii)] $L$ is minimal non-nilpotent, and so is as described in \cite{nilp};
\item[(iii)] $\bar{L}=\bar{A} \dot{+} F\bar{b}$, where $\bar{A}$ is the unique minimal abelian ideal of $\bar{L}$ and $\phi(L)+Fb$ is minimal non-nilpotent; 
\item[(iv)] $\bar{L}= \bar{A} \dot{+} (F \bar{b_1} + F \bar {b_2})$, where $\bar{A}$ is a minimal abelian ideal of $\bar{L}$, $\bar{B}= F\bar{b_1}+ F\bar{b_2}$ is a subalgebra of $\bar{L}$ and $L/\phi(L)$ acts nilpotently on $\phi(L)$; or
\item[(v)] $\bar{L} = (\bar{A_1} \oplus \bar{A_2}) \dot{+} F\bar{b}$, where $\bar{A_1}$ and $\bar{A_2}$ are minimal abelian ideals of $\bar{L}$ and $L/\phi(L)$ acts nilpotently on $\phi(L)$.
\end{itemize}
\end{theor}
\begin{proof} Suppose that all $2$-maximal subalgebras of $L$ are nilpotent. Then $\bar{L} = (\bar{A_1} \oplus \ldots \oplus \bar{A_n}) \dot{+} \bar{B}$, where $\bar{A_i}$ is a minimal abelian ideal of $\bar{L}$ for each $i=1, \ldots, n$, $\bar{A_1} \oplus \ldots \oplus \bar{A_n}$ is the nilradical, $\bar{N}$, of $\bar{L}$ and $\bar{B}$ is a subalgebra of $\bar{L}$, by \cite[Theorem 7.3]{frat}. If $n>2$ we have $\bar{A_i}+\bar{B}$ is nilpotent for each $i=1, \ldots, n$. But then $\bar{L}$, and hence $L$, is nilpotent, by \cite[Theorem 6.1]{frat}. Suppose that $\dim \bar{B}>2$ and let $\bar{C}$ be a minimal ideal of $\bar{B}$. If $\dim \bar{C}=1$ then $\bar{N}+\bar{C}$ is a nilpotent ideal of $\bar{L}$, contradicting the fact that $\bar{N}$ is the nilradical of $\bar{L}$; if $\dim \bar{C}>1$ we have that $\bar{N}+F \bar{c}$ is nilpotent for each $\bar{c} \in \bar{C}$ which again implies that $\bar{N}+\bar{C}$ is a nilpotent ideal of $\bar{L}$. Finally, if $n=2$ and $\dim \bar{B}=2$ a similar argument produces a contradiction.
\par

So suppose next that $n=1$ and $\dim \bar{B}=1$. Then the maximal subalgebras of $L$ are $A$ and $\phi(L)+Fx$, where $x \notin N$. If $\phi(L)+Fb$ is nilpotent we have case (ii); if it is minimal non-nilpotent we have case (iii).
\par

Next let $n=1$ and $\dim \bar{B}=2$. If $B= Fb_1+Fb_2$ then $A+Fb_1$ and $A+Fb_2$ are maximal subalgebras of $L$, and so $\phi(L)+Fb_1$ and $\phi(L)+Fb_2$ are $2$-maximal subalgebras.  It follows that $B$ acts nilpotently on $\phi(L)$ and we have case (iv).
\par

Finally, suppose that $n=2$ and $\dim \bar{B}=1$. Maximal subalgebras are $A_1 \oplus A_2$, $A_1+Fb$ and $A_2+Fb$, and $\phi(L)+Fb$ is a $2$-maximal subalgebra. It follows that $Fb$ acts nilpotently on $\phi(L)$ and we have case (v).
\par

The converse is straightforward.
\end{proof}
\bigskip

If $S$ is a subalgebra of $L$, the {\em centraliser} of $S$ in $L$ is $C_L(S)=\{x \in L \colon [x,S]=0\}$.

\begin{coro}\label{c:solv} With the notation of Theorem \ref{t:solv}, if $L$ is solvable and $F$ is algebraically closed, then  all $2$-maximal subalgebras of $L$ are nilpotent if and only if one of the following occurs:
\begin{itemize}
\item[(a)] $L$ is nilpotent;
\item[(b)] $\dim L \leq 3$;
\item[(c)] $F$ has characteristic $p$, $\bar{L}= \oplus_{i=0}^{p-1} F\bar{a_i} + F \bar{b_1} + F \bar {b_2}$, where $[\bar{a_i},\bar{b_1}] = a_{i+1}$, $[\bar{a_i},\bar{b_2}]= (\alpha +i) \bar{a_i}$ for $i=0, \ldots, p-1$ ($\alpha \in F$, suffices modulo $p$), $[\bar{b_1}, \bar{b_2}] =\bar{b_1}$ and $L/\phi(L)$ acts nilpotently on $\phi(L)$; or 
\item[(d)] $\bar{L}= F\bar{a_1} + F \bar{a_2} + F \bar {b}$, where $[\bar{b},\bar{a_1}] = \bar{a_1}$, $[\bar{b},\bar{a_2}]= \alpha \bar{a_2}$ ($\alpha \in F$), $[\bar{a_1}, \bar{a_2}] =0$ and $L/\phi(L)$ acts nilpotently on $\phi(L)$.
\end{itemize}
\end{coro}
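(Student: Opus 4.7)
The plan is to specialise each of the five cases of Theorem~\ref{t:solv} to an algebraically closed base field~$F$. Throughout I will use that over such an $F$ every linear operator has an eigenvector, so any irreducible module for an abelian Lie subalgebra is one-dimensional and any commuting family of operators has a common eigenvector. Case~(i) of the theorem yields case~(a) directly.

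For theorem-case~(ii), the structure theorem in \cite[Theorem 2.1]{nilp} gives $L=N+Fx$ with $\operatorname{ad}x$ acting irreducibly on $N/N^{2}$; algebraic closedness then forces $\dim N/N^{2}=1$, and since a nilpotent Lie algebra with one-dimensional abelianisation is one-generated and hence itself one-dimensional, we get $\dim L=2$, that is, case~(b). Theorem-case~(iii) is handled the same way: $\dim\bar{A}=1$ and $\phi(L)+Fb$ is two-dimensional minimal non-nilpotent, so $\dim\phi(L)=1$ and $\dim L=3$. In theorem-case~(v) each $\bar{A}_{i}$ is a one-dimensional $F\bar{b}$-module, and rescaling $\bar{b}$ so that $[\bar{b},\bar{a}_{1}]=\bar{a}_{1}$ delivers case~(d).

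The substantial step is theorem-case~(iv). I first rule out $\bar{B}$ being abelian: common eigenvectors give $\dim\bar{A}=1$, and a short Jacobi calculation combined, if necessary, with a linear change of basis in $\bar{B}$ produces a non-zero element of $\bar{B}$ that is central in $\bar{L}$, so that $\bar{A}$ together with this element spans an abelian ideal strictly larger than $\bar{N}=\bar{A}$, a contradiction. Normalising $[\bar{b}_{1},\bar{b}_{2}]=\bar{b}_{1}$, the Jacobi identity $[[\bar{b}_{1},\bar{b}_{2}],\bar{a}]=[\bar{b}_{1},[\bar{b}_{2},\bar{a}]]-[\bar{b}_{2},[\bar{b}_{1},\bar{a}]]$ shows that whenever $\dim\bar{A}=1$ the element $\bar{b}_{1}$ must centralise $\bar{A}$, and the same contradiction appears; by Lie's theorem this is the situation in characteristic zero, so case~(iv) is empty there and is absorbed into case~(d) via case~(v). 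In characteristic $p$ I pick a $\bar{b}_{2}$-eigenvector $\bar{a}_{0}$ with eigenvalue~$\alpha$, define $\bar{a}_{i+1}=[\bar{a}_{i},\bar{b}_{1}]$, and prove by induction, again via the above Jacobi identity, that $[\bar{a}_{i},\bar{b}_{2}]=(\alpha+i)\bar{a}_{i}$. The eigenvalues are distinct modulo~$p$, so $\bar{a}_{0},\dots,\bar{a}_{p-1}$ are linearly independent; irreducibility and the fact that $\bar{a}_{p}$ has the same $\bar{b}_{2}$-eigenvalue as $\bar{a}_{0}$ force $\bar{a}_{p}$ to be a non-zero scalar multiple of $\bar{a}_{0}$, which after a final rescaling of $\bar{a}_{0}$ yields the cyclic presentation of case~(c). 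The converse in each case is a routine verification from the explicit presentations, using that $L/\phi(L)$ acts nilpotently on $\phi(L)$ to control subalgebras crossing $\phi(L)$; the principal obstacle is the positive-characteristic construction in case~(iv), where algebraic closedness is used in a substantive way rather than merely to force one-dimensionality of irreducible modules.
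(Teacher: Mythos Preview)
Your argument follows essentially the same route as the paper's: specialise each case of Theorem~\ref{t:solv} using that irreducible modules for abelian Lie algebras over an algebraically closed field are one-dimensional, and in case~(iv) rule out abelian $\bar{B}$, rule out $\dim\bar{A}=1$ via a centraliser/Jacobi contradiction, and then build the explicit basis in characteristic~$p$ from a $\bar{b}_2$-eigenvector. The only substantive difference is that the paper, before constructing the basis, invokes the classification of irreducible modules for the two-dimensional non-abelian restricted Lie algebra \cite[Example~1, p.~244]{sf} to know in advance that $\dim\bar{A}\in\{1,p\}$; you instead attempt to deduce $\dim\bar{A}=p$ directly from the construction.

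That is where your write-up has a gap. The assertion that ``irreducibility and the fact that $\bar{a}_{p}$ has the same $\bar{b}_{2}$-eigenvalue as $\bar{a}_{0}$ force $\bar{a}_{p}$ to be a non-zero scalar multiple of $\bar{a}_{0}$'' is not justified as stated: a priori the $\alpha$-eigenspace of $\bar{b}_2$ on $\bar{A}$ could have dimension greater than one, and then $\operatorname{span}(\bar{a}_0,\dots,\bar{a}_{p-1})$ need not be $\bar{b}_1$-stable, so irreducibility gives nothing. You also need to say why none of $\bar{a}_1,\dots,\bar{a}_{p-1}$ vanish. Both points are easy to repair without the citation: observe that $(\operatorname{ad}\bar{b}_1)^p$ commutes with $\operatorname{ad}\bar{b}_2$ on $\bar{A}$ (since $p\cdot\operatorname{ad}\bar{b}_1=0$), hence is a $\bar{B}$-endomorphism and by Schur's lemma a scalar~$c$; if $c=0$ then $\bar{b}_1$ acts nilpotently and its kernel contains a $\bar{b}_2$-eigenvector, giving a one-dimensional submodule and the contradiction you already noted; if $c\neq 0$ then $\bar{a}_p=c\,\bar{a}_0$ and all $\bar{a}_i$ are non-zero, so $\operatorname{span}(\bar{a}_0,\dots,\bar{a}_{p-1})$ is a non-zero $\bar{B}$-submodule, hence all of $\bar{A}$. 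With this addition your proof is complete and matches the paper's.
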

\begin{proof} We consider in turn each of the cases given in Theorem \ref{t:solv}. Clearly case (i) gives (a), and if case (ii) holds then $\dim L=2$ (see \cite{nilp}), which is included in (b). If case (iii) holds, then $\bar{A}$ and $\phi(L)$ are both one dimensional, and so we have (b) again.
\par

Consider next case (iv). Suppose first that $\bar{B}$ is abelian. Then $\dim \bar{A}=1$, by \cite[Lemma 5.6]{sf}. But now $\dim \bar{L}/C_{\bar{L}}(F\bar{a})\leq1$ so $\dim C_{\bar{L}}(F\bar{a}) \geq 2$, contradicting the fact that $C_{\bar{L}}(F\bar{a})= F \bar{a}$. Thus $B$ cannot be abelian.
\par

If $\bar{B}$ is non-abelian, then $\bar{B}=F\bar{b_1}+F\bar{b_2}$ where $[\bar{b_1},\bar{b_2}]=\bar{b_1}$.  If $F$ has characteristic zero, then $\dim \bar{A}=1$, by Lie's Theorem. But now, as in the previous paragraph, $\dim C_{\bar{L}}(F\bar{a}) \geq 2$, yielding the same contradiction. Hence $F$ has characteristic $p>0$. Then this algebra has a unique $p$-map making it into a restricted Lie algebra: namely $\bar{b_1}^{[p]} = 0, \bar{b_2}^{[p]} = \bar{b_2}$ (see \cite{sf}); its irreducible modules are of dimension one or $p$, by \cite[Example 1, page 244]{sf}. Once again we can rule out the possibility that  $\dim \bar{A}=1$. So suppose now that $\dim \bar{A}=p$.  Let $\alpha$ be an eigenvalue for ad\,$\bar{b_2}|_{\bar{A}}$, so $[\bar{a},\bar{b_2}] = \alpha\,\bar{a}$ for some $\bar{a} \in \bar{A}$. Then $[\bar{a}(\hbox{ad}\,\bar{b_1})^i,\bar{b_2}] = (\alpha + i) \bar{a} (\hbox{ad}\,\bar{b_1})^i$ for every $i$, so putting $\bar{a_i} = \bar{a} (\hbox{ad}\,\bar{b_1})^i$ we see that $F\bar{a_0} + \dots + F\bar{a_{p-1}}$ is $\bar{B}$-stable and hence equal to $\bar{A}$. We then have the multiplication given in (e).
\par

Finally, consider case (v). Then $\bar{A_1}$ and $\bar{A_2}$ are one-dimensional. Moreover, if $L$ is not nilpotent, the $\bar{b}$ must act non-trivially on at least one of them, $\bar{A_1}=F\bar{a_1}$, say. This gives the multiplication described in (f). 
\end{proof}

\section{$3$-maximals subalgebras}
We consider first Lie algebras all of whose $3$-maximal subalgebras are ideals. We shall need the following lemma, which is an easy generalisation of \cite[Lemma 2]{stit}.
\begin{lemma}\label{l:three} Suppose that every $n$-maximal subalgebra of $L$ is an ideal of $L$. Then every $(n-1)$-maximal subalgebra of $L$ is nilpotent and is either an ideal or is one dimensional.
\end{lemma}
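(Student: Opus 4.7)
The plan is to mirror Stitzinger's original argument from \cite[Lemma 2]{stit} and the proof of Lemma \ref{l:nmax}: first extract nilpotency of each $(n-1)$-maximal subalgebra via Barnes' theorem, then exploit the fact that the hypothesis forces many ideals of $L$ to sit inside such a subalgebra, in order to show that the whole subalgebra is either one-dimensional or itself an ideal of $L$.

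Let $J$ be an $(n-1)$-maximal subalgebra of $L$. Every maximal subalgebra $I$ of $J$ is an $n$-maximal subalgebra of $L$, hence an ideal of $L$ by hypothesis, and therefore an ideal of $J$. Barnes' theorem \cite{barnes} then makes $J$ nilpotent. This part is essentially a repetition of the argument already given for Lemma \ref{l:nmax}.

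For the dichotomy, suppose that $\dim J \geq 2$. Since $J$ is nilpotent, every maximal subalgebra of $J$ is an ideal of codimension one containing $J^2$, so maximal subalgebras of $J$ correspond bijectively to hyperplanes of $J/J^2$. I would argue that $\dim J/J^2 \geq 2$: otherwise $J = Fx + J^2$ for some $x$, and the standard nilpotence induction $J = \langle x\rangle + J^k$ for all $k$ (with $J^k = 0$ eventually) would force $J = \langle x \rangle = Fx$, contradicting $\dim J \geq 2$. Thus $J/J^2$ contains two distinct hyperplanes, which lift to two distinct maximal subalgebras $I_1,I_2$ of $J$. By hypothesis both are ideals of $L$, so $I_1 + I_2$ is an ideal of $L$; and since $I_1 \subsetneq I_1+I_2 \subseteq J$ with $I_1$ maximal in $J$, we get $I_1+I_2 = J$, so $J$ is an ideal of $L$.

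I do not anticipate any real obstacle. The one mildly delicate point is the elementary Lie-algebraic fact that a nilpotent Lie algebra with one-dimensional abelianisation is itself one-dimensional, since the subalgebra generated by a single element in a Lie algebra is just its one-dimensional span; once that is in hand, the hyperplane count and the sum-of-ideals trick finish the proof immediately.
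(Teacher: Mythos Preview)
Your argument is correct and follows the same line as the paper's proof: obtain nilpotency from Lemma~\ref{l:nmax} (equivalently Barnes' theorem), then in dimension at least two find two distinct maximal subalgebras, note they are ideals of $L$, and sum them. The only difference is cosmetic: where the paper invokes \cite[Lemma 1]{stit} to get two distinct maximal subalgebras, you supply the direct argument via $\dim J/J^2 \geq 2$ for a nilpotent $J$ of dimension at least two.
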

\begin{proof} Let $K$ be a $(n-1)$-maximal subalgebra of $L$. The fact that $K$ is nilpotent follows from Lemma \ref{l:nmax}. Suppose that $\dim K>1$. Then $K$ has at least two distinct maximal subalgebras $J_1$ and $J_2$, by  \cite[Lemma 1]{stit}. These are $n$-maximal subalgebras of $L$ and so are ideals of $L$. Moreover, $K=J_1+J_2$ and so is an ideal of $L$.
\end{proof}

\begin{theor}\label{t:three}  Let $L$ be a solvable Lie algebra over a field $F$. Then every $3$-maximal subalgebra of $L$ is an ideal of $L$ if and only if one of the following holds:
\begin{itemize}
\item[(i)] $L$ is nilpotent and $\phi(K)= \phi(M)$ for every $2$-maximal subalgebra $K$ of $L$ and every maximal subalgebra $M$ of $L$ containing it; or
\item[(ii)] $\dim L \leq 3$.
\end{itemize}
\end{theor}
\begin{proof}  Suppose that every $3$-maximal subalgebra of $L$ is an ideal of $L$. Then  Lemma \ref{l:three} shows that $L$ is given by Theorem \ref{t:solv}. We consider each of the cases in turn, and use the notation of that result. Suppose first that $L$ is nilpotent and let $J$ be a $3$-maximal subalgebra of $L$, $K$ be any $2$-maximal subalgebra of $L$ containing it, and $M$ be any maximal subalgebra of $L$ containing $K$. Then $J$ is an ideal of $L$ and $M/J$ is two dimensional. It follows that $M^2 \subseteq J$ and so $M^2 \subseteq \phi(K)=K^2 \subseteq M^2$. Hence $\phi(K)=M^2=\phi(M)$.
\par

Now suppose that $\bar{L}=\bar{A} \dot{+} \bar{B}$, where $\bar{A}$ is the unique minimal ideal of $\bar{L}$ and $\bar{B}$ is a subalgebra of $\bar{L}$ with $\dim \bar{B} \leq 2$. This covers cases (ii), (iii) and (iv) of Theorem \ref{t:solv}. If $\dim \bar{A}>2$ then there is a proper subalgebra $\bar{C}$ of $\bar{A}$ which is a $3$-maximal subalgebra of $\bar{L}$, and so an ideal of $\bar{L}$, contradicting the minimality of $\bar{A}$. If $\dim \bar{B}=2$ then $A+Fb$ is a maximal subalgebra of $L$ for each $\bar{0} \neq \bar{b} \in \bar{B}$. It follows that $\phi(L)+Fb$ is a $2$-maximal subalgebra of $L$. If this is an ideal of $L$ then $F \bar{b}$ is a minimal ideal of $\bar{L}$, contradicting the uniqueness of $\bar{A}$. It follows from Lemma \ref{l:three} that it has dimension one, and so $\phi(L)=0$. Similarly, $\dim \bar{A}=2$ yields that $\phi(L)=0$. Hence $\phi(L) \neq 0$ implies that $\dim(L/\phi(L) \leq 2$ and thus that $L$ is nilpotent. So suppose that $\phi(L)=0$ and $\dim L=4$. Then $A+Fb$ is a maximal subalgebra for every $b \in B$, and so $Fa$ is a $3$-maximal subalgebra, and hence an ideal, of $L$ for every $a \in A$, contradicting the minimality of $A$. Thus $\dim L \leq 3$.  
\par

So, finally, suppose that case (v) of Theorem \ref{t:solv} holds. We have that $\phi(L)=0$ as in the paragraph above. Also, if $\dim A_i >1$ ($i=1,2$) there is a proper subalgebra $C$ of $A_i$ which is a $3$-maximal subalgebra, and hence an ideal, of $L$.  It follows that $\dim A_i=1$ for $i=1,2$ and $\dim L = 3$.
\par

Conversely, suppose that (i) or (ii) hold. If (ii) holds then every $3$-maximal is $0$ and thus an ideal of $L$, so suppose that (i) holds. Let $J$ be a $3$-maximal subalgebra of $L$. Then $J$ is a maximal subalgebra of a $2$-maximal subalgebra $K$ of $L$ and $M^2= \phi(M)= \phi(K) \subseteq J$ for every maximal subalgebra $M$ containing $K$. It follows that $J$ is an ideal of $M$. But now $\dim L/J=3$ and there are two maximal subalgebras $M_1$ and $M_2$ of $L$ containing $J$ with $L=M_1+M_2$. Since $J$ is an ideal of $M_1$ and $M_2$, it is an ideal of $L$. 
\end{proof}

\begin{theor}\label{t:3nonsolv} Let $L$ be a non-solvable Lie algebra over a field $F$. Then every $3$-maximal subalgebra of $L$ is an ideal of $L$ if and only if one of the following holds:
\begin{itemize}
\item[(i)] $L$ is simple, all $2$-maximal subalgebras of $L$ are at most one dimensional and at least one of them has dimension one; 
\item[(ii)] $L/Z(L)$ is a simple algebra, all of whose maximal subalgebras are one dimensional, $Z(L)= \phi(L)$ and $\dim Z(L)=0$ or $1$;
\item[(iii)] $L=S \dot{+} Fx$ where $S$ is a simple ideal of $L$ and all maximal subalgebras of $S$ are one dimensional.
\end{itemize}
\end{theor}
\begin{proof} Suppose that every $3$-maximal subalgebra of $L$ is an ideal of $L$. Clearly, if $L$ is simple then every $2$-maximal subalgebra has dimension at most one, by Lemma \ref{l:three}, and so satisfies (i) or (ii). So let $N$ be a maximal ideal of $L$. Suppose first that $\dim L/N=1$, so $L=N \dot{+} Fx$, say. Clearly $N$ has more than one maximal subalgebra, since otherwise it is one dimensional and $L$ is solvable. If $N$ has a maximal subalgebra $K_1$ that is an ideal of $L$, then $N=K_1+K_2$, where $K_2$ is another maximal subalgebra of $L$, and both $K_1$ and $K_2$ are nilpotent. But then $N$, and hence $L$, is solvable. It follows from Lemma \ref{l:three} that every maximal subalgebra of $N$ is one dimensional. Let $I$ be a non-trivial ideal of $N$. Then $\dim N/C_N(I) \leq 1$. But this implies that $\dim N=2$ and $L$ is solvable again. It follows that $N$ is simple with all maximal subalgebras one dimensional. Hence, $L$ is as in case (iii).
\par

So suppose now that $L/N$ is simple. Then all $2$-maximal subalgebras of $L/N$ have dimension at most one. Suppose first that $L/N$ has a one-dimensional $2$-maximal subalgebra $A/N$. Then $\dim A=1$, by Lemma \ref{l:three}, and so $N=0$ and we have case (i) again. So suppose now that all maximal subalgebras of $L/N$ are one dimensional. Then $N$ is nilpotent and if $K$ has codimension one in $N$, $K$ is an ideal of $L$. Moreover, $K+Fs$ is a $2$-maximal subalgebra of $L$ for every $s \notin N$. It follows from Lemma \ref{l:three} that $K=0$. Hence $\dim N=1$. But now $\dim L/C_L(N) \leq 1$, which implies that $N=Z(L)$. If $Z(L)= \phi(L)$ we have case (ii). If $Z(L) \neq \phi(L)$ then we have a special case of (iii).
\par

The converse is straightforward.  
\end{proof}

\begin{coro}\label{c:3nonsolv}   Let $L$ be a non-solvable Lie algebra over an algebraically closed field $F$ of characteristic different from $2,3$. Then every $3$-maximal subalgebra of $L$ is an ideal of $L$ if and only if $L \cong sl(2)$. 
\end{coro}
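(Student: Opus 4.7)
The plan is to apply Theorem \ref{t:3nonsolv} to obtain the three possible structural descriptions of $L$, and then use the hypotheses that $F$ is algebraically closed of characteristic $\neq 2,3$ to rule out two of them and pin down the third as $sl(2)$.

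Cases (ii) and (iii) of Theorem \ref{t:3nonsolv} each exhibit a simple Lie algebra---namely $L/Z(L)$ in (ii) and the simple ideal $S$ in (iii)---all of whose maximal subalgebras are one-dimensional, which is exactly condition (iii) of Theorem \ref{t:stit}. By the remark following Theorem \ref{t:sub} (an application of \cite[Theorem 3.4]{chief}), such a simple algebra over a perfect field of characteristic $0$ or $p>3$ must be three-dimensional over a field $F$ with $\sqrt{F}\not\subseteq F$. Since our $F$ is algebraically closed, $\sqrt{F}=F$, so both cases are excluded.

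Only case (i) remains: $L$ is simple with every $2$-maximal subalgebra of dimension at most one. Since $L$ is simple, $\phi(L)$ is a proper ideal of $L$ and hence is zero. By Lemma \ref{l:nmax} the hypothesis gives that every $2$-maximal subalgebra of $L$ is nilpotent, so Theorem \ref{t:nonsolv} applies and yields $L\cong L/\phi(L)\cong sl(2)$. For the converse, the maximal subalgebras of $sl(2)$ over an algebraically closed field of characteristic $\neq 2,3$ are the two-dimensional Borel subalgebras, so every $3$-maximal subalgebra is the trivial subalgebra and is trivially an ideal.

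There is no real obstacle: the argument is essentially a matter of assembling Theorems \ref{t:nonsolv} and \ref{t:3nonsolv} together with the remark after Theorem \ref{t:sub}. The only point that needs a little care is verifying that the simple algebra appearing in each of cases (ii) and (iii) really does have every maximal subalgebra one-dimensional (so that the remark from \cite[Theorem 3.4]{chief} applies), but this is immediate from the formulation of Theorem \ref{t:3nonsolv}.
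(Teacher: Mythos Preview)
Your proof is correct and uses essentially the same ingredients as the paper's---Theorems \ref{t:nonsolv} and \ref{t:3nonsolv}---only in the reverse order: you first invoke Theorem \ref{t:3nonsolv} and eliminate cases (ii) and (iii) via the remark after Theorem \ref{t:sub}, then apply Theorem \ref{t:nonsolv} in the surviving case, whereas the paper first applies Theorem \ref{t:nonsolv} to get $L/\phi(L)\cong sl(2)$ and then reads off $\phi(L)=0$ from Theorem \ref{t:3nonsolv}. The underlying reason cases (ii) and (iii) are excluded is the same in both arguments.
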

\begin{proof} Suppose that every $3$-maximal subalgebra of $L$ is an ideal of $L$. Then every $2$-maximal subalgebra of $L$ is nilpotent, so $L/\phi(L) \cong sl(2)$, by Theorem \ref{t:nonsolv}. But $\phi(L)=0$ by Theorem \ref{t:3nonsolv}. The converse is clear.
\end{proof}
\bigskip

Next we give a characterisation of those Lie algebras in which every $3$-maximal subalgebra is a subideal.

\begin{theor}\label{t:3solv} Let $L$ be a solvable Lie algebra over a field $F$. Then every $3$-maximal subalgebra of $L$ is a subideal of $L$ if and only if one of the following occurs:
\begin{itemize}
\item[(i)] $L$ is nilpotent;
\item[(ii)] $L=N \dot{+} Fb$ where $N$ is the nilradical, dim\,$N^2=1$, ad\,$b$ acts irreducibly on $N/N^2$, and $N^2+Fb$ is abelian;
\item[(iii)] $\bar{L}=\bar{A} \dot{+} F\bar{b}$, where $\bar{A}$ is the unique minimal abelian ideal of $\bar{L}$, $\phi(L)^2=0$ and $\phi(L)$ is an irreducible $Fb$-module; 
\item[(iv)] $L= A \dot{+} (F b_1 + F b_2)$, where $A$ is a minimal abelian ideal of $L$, and $B= Fb_1+ Fb_2$ is a subalgebra of $L$; or
\item[(v)] $L = (A_1 \oplus A_2) \dot{+} Fb$, where $A_1$ and $A_2$ are minimal abelian ideals of $L$.
\end{itemize} 
\end{theor}
\begin{proof} Suppose that every $3$-maximal subalgebra of $L$ is a subideal of $L$. Then $L$ is as described in Theorem \ref{t:solv}. We consider each of the cases in turn. In case (i) every subalgebra of $L$ is a subideal of $L$. Suppose that case (ii) holds, so $L=N \dot{+} Fb$ where $N/N^2$ is a faithful irreducible $Fb$-module and $N^2+Fb$ is nilpotent. Let $C$ be an ideal of $N^2+Fb$ of codimension one in $N^2$. Then $C+Fb$ is a $2$-maximal subalgebra of $L$. Suppose that $C \neq 0$. Then $b \in D$ where $D \subset C+Fb$ is a $3$-maximal subalgebra of $L$. Since $D$ is a nilpotent subideal of $L$, there is a $k \in \N$ such that $N$(ad\,$D)^k=0$. Since $b \in D$ and $N/N^2$ is faithful, this is impossible. Hence $D=0$ and $\dim N^2=1$. 
\par

Suppose that (iii) holds. Then $\phi(L)/\phi(L)^2$ is a faithful irreducible $Fb$-module, and so $\phi(L)^2+Fb$ is a $2$-maximal subalgebra of $L$. If $\phi(L)^2 \neq 0$ then $b \in D$ where $D \subset \phi(L)^2+Fb$ is a $3$-maximal subalgebra of $L$. But this yields a contradiction as in the preceding paragraph. 
\par

Suppose next that (iv) holds. Then we can choose $b_1, b_2$ so that $[\bar{b_1},\bar{b_2}]= \lambda \bar{b_2}$ where $\lambda=0,1$. Then $[\bar{A},\bar{b_2}]$ is an ideal of $\bar{L}$ and so is equal to $\bar{A}$, since, otherwise, $\bar{b_2} \in C_{\bar{L}}(\bar{A})= \bar{A}$. Now $\phi(L)+Fb_2$ is a $2$-maximal subalgebra of $L$. Thus, if $\phi(L) \neq 0$ we have that $b_2$ belongs to a $3$-maximal subalgebra of $L$, giving a contradiction again.
\par

Finally, suppose that (v) holds. Then $\phi(L)+Fb$ is a $2$-maximal subalgebra of $L$ and we conclude that $\phi(L)=0$ as above.
\par

Conversely, if any of these cases are satisfied then every $3$-maximal subalgebra of $L$ is inside the nilradical of $L$, and hence is a subideal of $L$. 
\end{proof}

\begin{propo}\label{p:3nonsolv}   Let $L$ be a non-solvable Lie algebra over an algebraically closed field $F$ of characteristic different from $2,3$. Then every $3$-maximal subalgebra of $L$ is a subideal of $L$ if and only if $L/\phi(L) \cong sl(2)$. 
\end{propo}
\begin{proof} Suppose that every $3$-maximal subalgebra of $L$ is a subideal of $L$. Then every $2$-maximal subalgebra of $L$ is nilpotent, so $L/\phi(L) \cong sl(2)$, by Theorem \ref{t:nonsolv}. Conversely, if $L/\phi(L) \cong sl(2)$ then every $3$-maximal subalgebra of $L$ is contained in $\phi(L)$, which is nilpotent, and so they are all subideals of $L$.
\end{proof}

\section{n-maximal subalgebras}
The following result was proved by Schenkman in \cite{schenk} for fields of characteristic zero, and can be extended to cover a large number of cases in characteristic $p$ by using a result of Maksimenko from \cite{mak}.

\begin{lemma}\label{l:nil} Let $I$ be a nilpotent subideal of a Lie algebra $L$ over a field $F$. If $F$ has characteristic zero, or has characteristic $p$ and $L$ has no subideal with nilpotency class greater than or equal to $p-1$, then $I \subseteq N$, where $N$ is the nilradical of $L$.
\end{lemma}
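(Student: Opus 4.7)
The statement splits cleanly along the characteristic of $F$, and in both cases the strategy is to invoke the cited external result rather than proceed from scratch.

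For characteristic zero, Schenkman's theorem \cite{schenk} is precisely the conclusion we want: every nilpotent subideal of a finite-dimensional Lie algebra over a field of characteristic zero is contained in the nilradical. So in this case I would simply quote it and be done.

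For characteristic $p$, the plan is to induct on the subideal depth $n$ in a chain $I = I_0 \lhd I_1 \lhd \cdots \lhd I_n = L$. The base case $n = 1$ is immediate, since a nilpotent ideal of $L$ lies in $N$ by definition of the nilradical. For the inductive step I would use Maksimenko's result \cite{mak} to enlarge $I$ from a nilpotent subideal of depth $n$ (namely, an ideal of $I_1$) to a nilpotent subideal of strictly smaller depth, for example by passing to the normal closure of $I$ inside $I_1$ (or further up the chain) and applying Maksimenko's bound to control the nilpotency class of this closure. The hypothesis that no subideal of $L$ has nilpotency class $\geq p-1$ is exactly what keeps Maksimenko's bound in the valid range throughout the chain. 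Iterating, one obtains a nilpotent ideal of $L$ containing $I$, which must lie inside $N$.

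The main obstacle is to confirm that Maksimenko's class bound propagates cleanly up the chain: each enlargement produces a new nilpotent subideal whose class could, a priori, be larger than that of $I$, so one must check that the class stays below $p-1$ at every intermediate stage. Since the blanket hypothesis is imposed on every subideal of $L$ simultaneously, this verification is automatic once the precise form of Maksimenko's bound has been pinned down, which is the one subtle technical point in the argument.
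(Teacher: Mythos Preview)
Your characteristic-zero case matches the paper exactly. For characteristic $p$, your inductive framework is sound, but the paper's execution differs from your sketch in one concrete way: rather than tracking the normal closure of $I$ up the chain, the paper works with the nilradicals $N_j$ of the intermediate terms $I_j$. Starting from $I \subseteq N_1$ (since $I$ is a nilpotent ideal of $I_1$), one notes that each $N_j$ is itself a subideal of $L$ and hence has nilpotency class $< p-1$ by the blanket hypothesis; Maksimenko's Corollary~1 then says that $N_j$ is invariant under every derivation of $I_j$, in particular under $\mathrm{ad}\,x$ for $x \in I_{j+1}$, so $N_j$ is an ideal of $I_{j+1}$ and therefore $N_j \subseteq N_{j+1}$. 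Chaining the inclusions gives $I \subseteq N_1 \subseteq \cdots \subseteq N_n = N$.

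Your ``normal closure'' route is a bit slippery as written: the normal closure of $I$ in $I_1$ is just $I$ itself, and Maksimenko's result in the form the paper invokes is a derivation-invariance statement for the \emph{nilradical}, not a nilpotency-class bound on an arbitrary ideal closure. (Indeed, an arbitrary nilpotent ideal of small class need not be derivation-invariant, already in characteristic zero.) If you replace ``normal closure of $I$'' with ``nilradical of $I_1$'' in your inductive step, your worry about the class staying below $p-1$ is handled exactly as you say --- automatically, by the global hypothesis on subideals --- and the argument becomes the paper's proof, phrased as induction on depth rather than as an explicit chain of inclusions.
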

\begin{proof} If $F$ has characteristic zero this is \cite[Lemma 4]{schenk}. For the characteristic $p$ case we follow Schenkman's proof. Let $I$ be a nilpotent subideal of $L$ and suppose that $I=I_0 < I_1 < \ldots < I_n=L$ is a chain of subalgebras of $L$ with $I_j$ an ideal of $I_{j+1}$ for $j=0, \ldots, n-1$. Let $N_j$ be the nilradical of $I_j$ and let $x_j \in I_j$. Then $I \subseteq N_1$, since $I$ is a nilpotent ideal of $I_1$. Also $[I_j,x_{j+1}] \subseteq I_j$, and so ad\,$x_{j+1}$ defines a derivation of $I_j$ for each $j=0, \ldots, n-1$. Moreover, $N_j$ is a subideal of $L$ and so has nilpotency class less than $p-1$. It follows from \cite[Corollary 1]{mak} that $[N_j,x_{j+1}] \subseteq N_j$, and hence that $N_j$ is an ideal of $I_{j+1}$. But then $N_j \subseteq N_{j+1}$, and $I \subseteq N_1 \subseteq N_2 \subseteq \ldots \subseteq N_n=N$, as claimed.
\end{proof}
\bigskip

We will refer to the characteristic $p$ condition in the above result as $F$ having characteristic {\em big enough}. 

\begin{lemma}\label{l:nsub} Let $L$ be a Lie algebra over a field $F$. Consider the following two conditions:
\begin{itemize}
\item[(i)] every $n$-maximal subalgebra of $L$ is contained in $N$; and
\item[(ii)] every $n$-maximal subalgebra of $L$ is a subideal of $L$. 
\end{itemize}
Then (i) implies (ii) and, if $F$ has characteristic zero or big enough, (ii) implies (i).
\end{lemma}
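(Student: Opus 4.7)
The plan is to handle the two implications separately, with (i)$\Rightarrow$(ii) being essentially immediate and (ii)$\Rightarrow$(i) requiring the characteristic hypothesis in order to apply Lemma \ref{l:nil}.

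For (i)$\Rightarrow$(ii), the key observation is that the nilradical $N$ is itself a nilpotent Lie algebra, so the normalizer condition in nilpotent Lie algebras tells us that every subalgebra of $N$ is a subideal of $N$; since $N$ is an ideal of $L$, any subideal of $N$ is automatically a subideal of $L$. Hence, if an $n$-maximal subalgebra $J$ is contained in $N$, it is a subideal of $L$. This direction needs no hypothesis on $F$.

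For (ii)$\Rightarrow$(i), assume $F$ has characteristic zero or big enough, and let $J$ be an $n$-maximal subalgebra, lying in some chain $J = S_0 < S_1 < \ldots < S_n = L$. The first step is to show that $J$ is nilpotent: since $S_1$ is an $(n-1)$-maximal subalgebra of $L$, Lemma \ref{l:nmax} applied under hypothesis (ii) gives that $S_1$ is nilpotent, and so the subalgebra $J \subseteq S_1$ is nilpotent too. By hypothesis (ii), $J$ is also a subideal of $L$. Thus $J$ is a nilpotent subideal, and Lemma \ref{l:nil} (using precisely the characteristic assumption in the statement) yields $J \subseteq N$, which is (i).

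The proof is essentially a bookkeeping exercise combining three earlier ingredients (the normalizer condition, Lemma \ref{l:nmax}, and Lemma \ref{l:nil}); there is no real obstacle. The only point requiring any care is to notice why the characteristic hypothesis is needed only for the implication (ii)$\Rightarrow$(i): it is invoked exclusively when applying Lemma \ref{l:nil} to promote ``nilpotent subideal'' to ``contained in the nilradical'', and without it one cannot rule out nilpotent subideals of large nilpotency class sitting outside $N$ in characteristic $p$.
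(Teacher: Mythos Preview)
Your proof is correct and follows essentially the same approach as the paper: the paper's argument for (i)$\Rightarrow$(ii) is the one-line observation that subalgebras of the nilpotent ideal $N$ are subideals of $L$, and for (ii)$\Rightarrow$(i) it invokes Lemma~\ref{l:nmax} (to get nilpotency of the $n$-maximal via its containing $(n-1)$-maximal) followed by Lemma~\ref{l:nil}, exactly as you do. Your write-up is simply more explicit about why Lemma~\ref{l:nmax} yields nilpotency of $J$ itself.
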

\begin{proof} (i) $\Rightarrow$ (ii):  It is clear that any subideal of $N$ is a subideal of $L$. 
\par

\noindent (ii) $\Rightarrow$ (i):  Let $I$ be an $n$-maximal subalgebra of $L$ and suppose that it is a subideal of $L$. Then, under the extra hypothesis, it is a nilpotent subideal of $L$, by Lemma \ref{l:nmax} and so is contained in $N$, by Lemma \ref{l:nil}.
\end{proof}
\bigskip

Clearly, if $L$ is solvable then a necessary condition for Lemma \ref{l:nsub} (i) to hold is that $\dim L/N \leq n$, since there is a chain of subalgebras of length $n$ from $N$ to $L$. However, this condition is not sufficient, in general, as is clear from previous results and the following.

\begin{theor} Let $L$ be a supersolvable Lie algebra over a field $F$ of characteristic zero or big enough. Then every $n$-maximal subalgebra of $L$ is a subideal of $L$ if and only if either
\begin{itemize}
\item[(i)] $L$ is nilpotent; or
\item[(ii)] $\dim L \leq n$.
\end{itemize}
\end{theor}
\begin{proof} Suppose that every $n$-maximal subalgebra of $L$ is a subideal of $L$,  but that $L$ is not nilpotent, and let $N$ be the nilradical of $L$. Let 
\[ 0=A_0 < A_1 < \ldots < A_k =N < \ldots < A_r=L
\] be a chief series for $L$ through $N$. Then each chief factor is one dimensional since $L$ is supersolvable and so $r= \dim L$. Let $x \in A_r \setminus A_{r-1}$. Then
\[ Fx < A_1+Fx < \ldots < A_{r-1}+Fx=L
\] is a maximal chain of subalgebras of $L$, and $Fx$ is an $(r-1)$-maximal subalgebra of $L$. If $r>n$ it follows that $x$ belongs to an $n$-maximal subalgebra of $L$. Since $x \not \in N$ this contradicts Lemma \ref{l:nsub}.  
\end{proof}
 
\bigskip

\end{document}